\newtheorem{thm}{Theorem}[section]
\newtheorem{cor}[thm]{Corollary}
\newtheorem{lem}[thm]{Lemma}
\newtheorem{prop}[thm]{Proposition}
\newtheorem{prob}[thm]{Problem}
\newcommand{\gt}{{\rm gt}}
\newcommand{\gpack}{{\rm gpack}}
\newcommand{\SM}{{\rm SM}}
\newcommand{\cp}{\,\square\,}
\newcommand{\strp}{\,\boxtimes\,}
\newcommand{\diam}{{\rm diam}}
\begin{document}
\title{Geodesic packing in graphs}

\author{
	Paul Manuel$^{a}$
	\and
	Bo\v{s}tjan Bre\v{s}ar$^{b,c}$
	\and
	Sandi Klav\v zar$^{b,c,d}$
}

\date{}

\maketitle
\vspace{-0.8 cm}
\begin{center}
	$^a$ Department of Information Science, College of Life Sciences, Kuwait University, Kuwait \\
	{\tt pauldmanuel@gmail.com}\\
	\medskip
	
	$^b$ Faculty of Natural Sciences and Mathematics, University of Maribor, Slovenia\\
	{\tt bostjan.bresar@um.si}\\
	\medskip

	$^c$ Institute of Mathematics, Physics and Mechanics, Ljubljana, Slovenia\\
	\medskip

	$^d$ Faculty of Mathematics and Physics, University of Ljubljana, Slovenia\\
	{\tt sandi.klavzar@fmf.uni-lj.si}\\
\end{center}

\begin{abstract}
A geodesic packing of a graph $G$ is a set of vertex-disjoint maximal geodesics. The maximum cardinality of a geodesic packing is the geodesic packing number ${\gpack}(G)$. It is proved that the decision version of the geodesic packing number is NP-complete. We also consider the geodesic transversal number, ${\gt}(G)$, which is the minimum cardinality of a set of vertices that hit all maximal geodesics in $G$.  While $\gt(G)\ge \gpack(G)$ in every graph $G$, the quotient ${\rm gt}(G)/{\rm gpack}(G)$ is investigated. By using the rook's graph, it is proved that there does not exist a constant $C < 3$ such that $\frac{{\rm gt}(G)}{{\rm gpack}(G)}\le C$ would hold for all graphs $G$. If $T$ is a tree, then it is proved that ${\rm gpack}(T) = {\rm gt}(T)$, and a linear algorithm for determining ${\rm gpack}(T)$ is derived. The geodesic packing number is also determined for the strong product of paths.
\end{abstract}

\noindent{\bf Keywords}: geodesic packing; geodesic transversal; computational complexity; rook's graph; diagonal grid

\medskip
\noindent{\bf AMS Subj.\ Class.}: 05C69; 05C12; 05C85

\section{Introduction}

Pairs of covering-packing problems, known also as dual min-max invariant problems~\cite{AzBu16}, are important topics in graph theory and in combinatorics. The max independent set problem and the min vertex cover problem is an appealing example~\cite{CaFe19}. Another well-known example is the max matching problem versus the min edge cover problem~\cite{gal-59}. Examples from combinatorial optimization are the min set cover problem \& the max set packing problem, and the bin covering \& bin packing problem \cite{HaLa09}. In this paper, we identify a new dual min-max pair: the geodesic transversal problem and the geodesic packing problem. The first one  was recently independently investigated in~\cite{MaBr21, PeSe21a}, here we complement these studies by considering the geodesic packing problem. 

A geodesic (i.e., a shortest path) in a graph $G$ is \textit{maximal} if it is not contained (as a subpath) in any other geodesic of $G$. A set $S$ of vertices of $G$ is a \textit{geodesic transversal} of $G$ if every maximal geodesic of $G$ contains at least one vertex of $S$. When $s\in S$ is contained in a maximal geodesic $P$ we say that vertex $s$ {\em hits} or {\em covers} $P$. 
The \textit{geodesic transversal number} of $G$, $\gt(G)$, is the minimum cardinality of a geodesic transversal of $G$.  
A {\em geodesic packing} of a graph $G$ is a set of vertex-disjoint maximal geodesics in $G$. The {\em geodesic packing number}, $\gpack(G)$, of $G$ is the maximum cardinality of a geodesic packing of $G$, and the {\em geodesic packing problem} of $G$ is to determine $\gpack(G)$.  By a {\em $\gpack$-set} of $G$ we mean a geodesic packing of size $\gpack(G)$. 

Let us mention some related concepts. A packing of a graph often means a set of vertex-disjoint (edge-disjoint) isomorphic subgraphs, that is, the $H$-packing problem for an input graph $G$ is to find the largest number of its disjoint subgraphs that are isomorphic to $H$. In particular, the problem has been investigated for different types of paths. For instance, Akiyama and Chv\'{a}tal~\cite{AkCh90} considered the problem from algorithmic point of view when $H$ is a path of fixed length. A survey on efficient algorithms for vertex-disjoint (as well as edge-disjoint) Steiner trees and paths packing problems in planar graphs was given in~\cite{Wagner93}. Dreier et al.~\cite{DrFu19} have studied the complexity of packing edge-disjoint paths where the paths are restricted to lengths  $2$ and $3$. In~\cite{JiXi16} edge-disjoint packing by stars and edge-disjoint packing by cycles were studied. 

In the rest of this section we first recall some notions needed in the rest of the paper. In the next section it is first proved that the geodesic packing problem is NP-complete. After that we investigate the quotient ${\rm gt}(G)/{\rm gpack}(G)$. We first prove that  $\gt(K_n \cp K_n) = n^2 - 2n + 2$ and use this result to demonstrate that there does not exist a constant $C < 3$ such that $\frac{\gt(G)}{\gpack(G)}\le C$ would hold for all graphs $G$. In Section~\ref{sec:trees} we consider the geodesic packing number of trees and prove that for a tree $T$ we have $\gpack(T)=\gt(T)$. A linear algorithm for determining $\gpack(T)$ is also derived. In the subsequent section the geodesic packing number is determined for the strong product of paths, while the paper is concluded with some closing remarks. 

Let $G=(V(G),E(G))$ be a graph. The order of $G$ will be denoted by $n(G)$. A path on consecutive vertices $a_1, a_2 \ldots, a_k$ will be denoted by $a_1a_2\ldots a_k$. If $n$ is a positive integer, then let $[n]=\{1,\ldots, n\}$. The \textit{Cartesian product} $G \cp H$ of graphs $G$ and $H$ is the graph with the vertex set  $V(G)\times V(H)$ and edges $(g,h)(g',h')$, where either $g = g'$ and $hh'\in E(H)$, or $h=h'$ and $gg'\in E(G)$. The {\em strong product} $G \strp H$ is obtained from $G \cp H$ by adding, for every edge $gg' \in E(G)$ and every edge $hh' \in E(H)$, an edge between the vertices $(g, h)$ and $(g', h')$ and another edge between the vertices $(g, h')$ and $(g', h)$.

\section{Preliminary results and NP-completeness}
\label{sec:prelim}

We start by showing NP-completeness of the geodesic packing problem which is formally defined as follows. 

\begin{center}
	\fbox{\parbox{0.96\linewidth}{\noindent
			{\sc Geodesic Packing Problem}\\[.8ex]
			\begin{tabular*}{\textwidth}{rl}
				{\em Input:} & A graph $G$ and a positive integer $k$.\\
				{\em Question:} & Does there exist a set of $k$ vertex-disjoint maximal geodesics in $G$? 
			\end{tabular*}
	}}
\end{center}

For our reduction we use the concept of  induced path partition. Computationally, given a graph $G$ and a positive integer $k$, the {\sc MaxInduced$P_k$Packing Problem} seeks for a maximum number of vertex-disjoint induced paths $P_k$. Saying that a set of vertex-disjoint induced paths on $k$ vertices is an {\em induced $P_k$-packing} of $G$, the problem is thus to maximize the cardinality of an induced $P_k$-packing. By~\cite[Theorem 3.1]{mt07} we know that {\sc MaxInduced$P_3$Packing Problem} is NP-hard on bipartite graphs with maximum degree $3$. 

Let $G$ be a graph with $V(G)=\{x_1,\ldots,x_n\}$. Then the {\em derived graph} $G'$ is defined as follows: $V(G') = V(G)\cup\{x,y,z\}$ and $E(G')=E(G)\cup\{xz,zy\}\cup\{zx_i:\, i \in [n]\}$. Without any possibility of confusion, we denote by $G$ also the subgraph of $G'$ induced by the vertices of the derived graph $G$.

\begin{lem} 
\label{L:NPcomp}
	A set $\Psi$ is an induced $P_3$-packing of $G$ if and only if $\Psi \cup \{(x,z,y)\}$ is a geodesic packing of the derived graph $G'$.
\end{lem}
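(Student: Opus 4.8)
The plan is to exploit the fact that the added vertex $z$ is universal in $G'$: since $z$ is adjacent to $x$, to $y$, and to every $x_i$, it dominates $V(G')$, so $\diam(G')=2$. Consequently every geodesic of $G'$ has at most three vertices, and I would first record the resulting description of maximal geodesics. A geodesic on three vertices $a$-$b$-$c$ (with $d_{G'}(a,c)=2$) can never be extended, because an extension would force an endpoint at distance $3$; hence every $3$-vertex geodesic of $G'$ is automatically maximal. The only maximal geodesics on two vertices are the edges $uv$ with $N_{G'}[u]=N_{G'}[v]$, since an edge is extendable to a $3$-vertex geodesic unless the closed neighbourhoods of its endpoints coincide.

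Next I would pin down the two types of maximal geodesics relevant to the statement. First, $(x,z,y)$ is a geodesic because the only neighbour of both $x$ and $y$ is $z$, so $d_{G'}(x,y)=2$; it is maximal because neither $x$ nor $y$ has a neighbour other than $z$, which rules out any extension. Second, for a triple $u,v,w\in V(G)$ forming the path $uvw$ in $G$: because no edges are added among the vertices of $V(G)$, we have $uw\in E(G')\iff uw\in E(G)$ and $d_{G'}(u,w)\in\{1,2\}$. Thus $uvw$ is a (necessarily maximal) geodesic of $G'$ exactly when $uw\notin E(G)$, that is, exactly when $uvw$ is an \emph{induced} $P_3$ of $G$. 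This equivalence is the engine of the whole argument.

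The forward direction is then immediate: if $\Psi$ is a set of pairwise vertex-disjoint induced $P_3$'s of $G$, each member is a maximal geodesic of $G'$ by the equivalence above, and $(x,z,y)$ is a maximal geodesic whose vertices $x,y,z$ lie outside $V(G)$ and hence avoid every member of $\Psi$; therefore $\Psi\cup\{(x,z,y)\}$ is a family of pairwise vertex-disjoint maximal geodesics, i.e.\ a geodesic packing of $G'$. For the converse, assume $\Psi\cup\{(x,z,y)\}$ is a geodesic packing. Each $P\in\Psi$ is a maximal geodesic disjoint from $(x,z,y)$, so $P$ avoids $x,y,z$ and lives entirely in $V(G)$; being a maximal geodesic of a diameter-$2$ graph, $P$ has two or three vertices, and if it has three it is an induced $P_3$ of $G$ by the equivalence, with vertex-disjointness inherited from the packing.

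The delicate point — and the step I expect to be the main obstacle — is ruling out the degenerate possibility that some $P\in\Psi$ is a single edge. An edge $uv$ with $u,v\in V(G)$ is a maximal geodesic of $G'$ precisely when $N_{G'}[u]=N_{G'}[v]$, which (as $z$ lies in both closed neighbourhoods) is equivalent to $N_{G}[u]=N_{G}[v]$, i.e.\ to $u,v$ being true twins in $G$. For the bipartite, maximum-degree-$3$ instances underlying the reduction this forces $\{u,v\}$ to be a $K_2$-component of $G$; since isolated edges and vertices are irrelevant to induced $P_3$-packing, they may be deleted from $G$ beforehand without changing the optimum, and under this harmless normalization no single-edge maximal geodesic survives. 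I would make this normalization explicit (equivalently, take as a standing assumption that $G$ has no true-twin edge), so that every $P\in\Psi$ is a $3$-vertex geodesic and hence an induced $P_3$ of $G$, turning $\Psi\mapsto\Psi\cup\{(x,z,y)\}$ into the genuine correspondence that the subsequent NP-completeness argument relies on.
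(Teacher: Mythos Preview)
Your argument is correct and takes the same route as the paper's: observe that $z$ is universal so $\diam(G')=2$, check that $(x,z,y)$ is a maximal geodesic, and check that an induced $P_3$ of $G$ is a maximal geodesic of $G'$. The paper's proof is a terse three lines that simply asserts ``all maximal geodesics in $G'$ are of length~$2$'' and then reads off the result; you have supplied the details behind that assertion.

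One remark on scope. You work harder on the converse than the lemma requires. By hypothesis $\Psi$ is already declared to be a set of vertex-disjoint induced $P_3$'s of $G$, and the paper \emph{defines} ``induced $P_3$-packing'' to mean exactly such a set; so the left-hand side of the biconditional holds automatically, and only the forward implication needs any argument. In particular, no member of $\Psi$ can be a single edge, so your careful discussion of two-vertex maximal geodesics (true-twin edges) is not needed for the lemma as stated. That said, your observation is precisely what one must check for the sentence the paper writes \emph{after} the lemma, namely $\gpack(G')=1+pack_{ind}^{3}(G)$: there a $\gpack$-set of $G'$ is not a priori of the form $\Psi\cup\{(x,z,y)\}$ with $\Psi$ consisting of $P_3$'s, and a true-twin edge inside $G$ could conceivably participate. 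Your argument that in a bipartite graph adjacent true twins force a $K_2$-component, which can be stripped without affecting the reduction, closes that gap cleanly; the paper glosses over it.
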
	

\proof
Note that all maximal geodesics in $G'$ are of length $2$. In particular, the path $P: xzy$ is a maximal geodesic, and every induced path $P_3$ in $G$ is a maximal geodesic in $G'$. The statement of the lemma now follows. 
\qed
\medskip

From Lemma~\ref{L:NPcomp} we also infer that $\gpack(G')=1+pack_{ind}^3(G)$, where we denote by $pack_{ind}^k(G)$ the maximum size of an induced $P_k$-packing in $G$. Now, turning back our attention to the decision versions of the problem, it is easy to see that  an instance $(G, k)$ of the {\sc MaxInduced$P_3$Packing Problem}, where $G$ is a bipartite graph with maximum degree $3$, reduces to an instance $(G', k+1)$ of the {\sc Geodesic Packing Problem}.

\begin{thm}
\label{thm:NP-complete}
	{\sc Geodesic Packing Problem} is NP-complete.
\end{thm}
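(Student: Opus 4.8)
The plan is to establish NP-completeness by combining membership in NP with NP-hardness, where the latter follows from the reduction machinery already assembled in the excerpt. For membership in NP, I would observe that a candidate solution is simply a collection of $k$ vertex-disjoint paths in $G$; given such a collection as a certificate, one can verify in polynomial time that the paths are pairwise vertex-disjoint, that each is a geodesic (by checking that the length of each equals the distance between its endpoints, computable via BFS), and that each is maximal (by checking that neither endpoint can be extended to a longer geodesic sharing the same distance structure). Since all these checks run in polynomial time in $n(G)$, the problem lies in NP.

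For NP-hardness, I would invoke the reduction already set up through Lemma~\ref{L:NPcomp} and the remark following it. The key external ingredient is the cited result \cite[Theorem 3.1]{mt07} that the \textsc{MaxInduced$P_3$Packing Problem} is NP-hard on bipartite graphs of maximum degree $3$. I would take an arbitrary instance $(G,k)$ of the decision version of that problem, where $G$ is bipartite with $\Delta(G) \le 3$, and construct its derived graph $G'$ in polynomial time (this construction adds only three vertices and $n+2$ edges, so it is clearly polynomial). I would then appeal to the identity $\gpack(G') = 1 + pack_{ind}^3(G)$ derived from Lemma~\ref{L:NPcomp}: the instance $(G,k)$ is a yes-instance for induced $P_3$-packing if and only if $(G', k+1)$ is a yes-instance for \textsc{Geodesic Packing Problem}. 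This equivalence, together with the polynomiality of the construction, completes the reduction.

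The step requiring the most care is the correctness of the reduction, which hinges on the structural claim inside the proof of Lemma~\ref{L:NPcomp}, namely that every maximal geodesic in $G'$ has length exactly $2$, and that the maximal geodesics of $G'$ are precisely the path $xzy$ together with the induced paths on three vertices living inside $G$. I would want to verify that the apex vertex $z$, adjacent to all of $V(G)$ and to the two pendant-like vertices $x$ and $y$, forces the diameter structure so that no geodesic can have length exceeding $2$, and that an induced $P_3$ in $G$ remains a maximal geodesic in $G'$ rather than being extendable through $z$. Since the excerpt has already proved Lemma~\ref{L:NPcomp} and I am permitted to assume it, the main remaining obstacle is purely the bookkeeping of the decision-problem equivalence; the genuine combinatorial content is fully encapsulated in the lemma, so the theorem follows with little additional work.

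\begin{proof}
The problem is in NP, since given a set of $k$ paths one can verify in polynomial time that they are pairwise vertex-disjoint, that each is a geodesic, and that each is maximal. For NP-hardness, let $(G,k)$ be an instance of the decision version of the \textsc{MaxInduced$P_3$Packing Problem} with $G$ bipartite and $\Delta(G)\le 3$, which is NP-hard by~\cite[Theorem 3.1]{mt07}. Construct the derived graph $G'$ in polynomial time. By Lemma~\ref{L:NPcomp} and the ensuing identity $\gpack(G')=1+pack_{ind}^3(G)$, the graph $G$ admits an induced $P_3$-packing of size $k$ if and only if $G'$ admits a geodesic packing of size $k+1$. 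Hence $(G,k)\mapsto (G',k+1)$ is a polynomial-time reduction, and \textsc{Geodesic Packing Problem} is NP-complete.
\end{proof}
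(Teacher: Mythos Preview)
Your proof is correct and follows essentially the same approach as the paper: reduce from \textsc{MaxInduced$P_3$Packing} on bipartite graphs of maximum degree $3$ via the derived graph $G'$, invoking Lemma~\ref{L:NPcomp} and the identity $\gpack(G')=1+pack_{ind}^3(G)$. If anything, your write-up is slightly more thorough, since you explicitly verify membership in NP and spell out the decision-instance map $(G,k)\mapsto(G',k+1)$, whereas the paper leaves both of these implicit.
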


By Theorem~\ref{thm:NP-complete} it is of interest to bound the geodesic packing number and to determine it for specific families of graphs. The following straightforward upper bound is useful. 

\begin{lem}
	\label{LUpperBoundGpack}
	Let $d$ be the length of a shortest maximal geodesic of a graph $G$. Then, $\gpack(G) \leq \lfloor n(G)/(d+1)\rfloor$.
\end{lem}

Given a set of vertex-disjoint maximal geodesics, each geodetic transversal clearly  hits each of the paths by at least one private vertex of the path. This fact in particular implies the following upper bound. 

\begin{lem}
\label{lem:CLowBound}
If $G$ is a graph, then $\gpack(G) \le \gt(G)$. 
\end{lem}

It is clear that $\gpack(P_n) = 1 = \gt(P_n)$ as well as $\gpack(K_{1,n}) = 1 = \gt(K_{1,n})$, hence the bound of Lemma~\ref{lem:CLowBound} is sharp. On the other hand, the value $\gt(G)$ can be arbitrarily bigger than $\gpack(G)$. For instance, $\gpack(K_{n}) = \lfloor\frac{n}{2}\rfloor$ and $\gt(K_{n})=n-1$.  Observe also that in $K_{n,n}$, $n\ge 2$, every maximal geodesic is of length $2$, hence $\gpack(K_{n,n}) = \lfloor \frac{2n}{3}\rfloor$, while on the other hand $\gt(K_{n,n}) = n$. However, we do not know whether the ratio of the two invariants is bounded and pose this as a problem.

\begin{prob}
\label{pr:ratio}
Is there an absolute constant $C$ such that $\frac{\gt(G)}{\gpack(G)}\le C$, for all graphs $G$?
\end{prob}

The example of complete graphs shows that if the constant $C$ in Problem~\ref{pr:ratio} exists, it cannot be smaller than $2$. To show that it actually cannot be smaller than $3$, consider the rook's graphs~\cite{Hoffman64} that can be described as the Cartesian product of two complete graphs or, equivalently, as the line graphs of complete bipartite graphs~\cite{HIK-2011}. 

\begin{prop} 
	\label{prop:gt-2dim-rook}
	If $n\ge 1$, then $\gt(K_n \cp K_n) = n^2 - 2n + 2$. 
\end{prop}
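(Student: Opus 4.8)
The plan is to pass to the complement of a transversal. A set $S\subseteq V(G)$ is a geodesic transversal precisely when $V(G)\setminus S$ induces no maximal geodesic, so
\[
\gt(G)=n(G)-\max\{\,|A|:\ A\subseteq V(G)\ \text{contains no maximal geodesic}\,\}.
\]
Writing $G=K_n\cp K_n$ with vertex set $[n]\times[n]$, so that $n(G)=n^2$, it thus suffices to show that the largest set containing no maximal geodesic has size $2n-2$ (for $n\ge 2$); the case $n=1$ is immediate since then $G=K_1$ and $\gt(G)=1=n^2-2n+2$.

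First I would identify the maximal geodesics. In the rook's graph two distinct vertices are adjacent when they share a coordinate and lie at distance $2$ otherwise, so $\diam(G)=2$. A row edge $(a,b)(a,b')$ sits inside the length-$2$ geodesic $(a,b'),(a,b),(c,b)$ for any $c\neq a$, and by the row-column symmetry the same holds for column edges; hence no edge is maximal, and since there are no geodesics of length at least $3$, the maximal geodesics are exactly the geodesics of length $2$. Each of these joins two vertices $(a,b),(c,d)$ with $a\neq c$, $b\neq d$ through one of their two common neighbours $(a,d)$ or $(c,b)$; that is, it is an ``$L$'': a corner together with one neighbour in its row and one neighbour in its column. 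The upshot, which is the heart of the proof, is the reformulation: a set $A$ contains no maximal geodesic if and only if no cell of $A$ has simultaneously another $A$-cell in its row and another $A$-cell in its column, i.e.\ every cell of $A$ is alone in its row or alone in its column within $A$.

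For the lower bound I would exhibit $A=\{(1,j):2\le j\le n\}\cup\{(i,1):2\le i\le n\}$, the first row and first column with the corner $(1,1)$ removed, which has size $2n-2$. Each cell $(1,j)$ is the only $A$-cell in its column and each $(i,1)$ is the only $A$-cell in its row, so no cell has both a row-mate and a column-mate, and $A$ indeed contains no maximal geodesic.

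The substantive part is the matching upper bound. Given any $A$ with no maximal geodesic, let $B$ be its cells that are alone in their row and $D$ its cells that are alone in their column; by the reformulation $A=B\cup D$. The cells of $B$ occupy pairwise distinct rows and those of $D$ pairwise distinct columns, so $|B|\le n$ and $|D|\le n$, giving the crude bound $|A|\le 2n$. The decisive observation producing the sharp constant is that the extremal values of $|B|$ and $|D|$ are incompatible with a large $A$: if $|B|=n$ then every row already contains its single $A$-cell, forcing $A=B$ and $|A|=n$, and symmetrically if $|D|=n$; otherwise $|B|\le n-1$ and $|D|\le n-1$, so $|A|\le 2n-2$. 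In every case $|A|\le 2n-2$ (using $n\ge 2$). I expect exactly this final step --- discarding the naive value $2n$ and extracting the $-2$ --- to be the only real obstacle, the rest being routine verification. Combining the bounds yields $\gt(K_n\cp K_n)=n^2-(2n-2)=n^2-2n+2$.
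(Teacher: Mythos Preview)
Your proof is correct and follows the same overall strategy as the paper: pass to the complement of a transversal, characterise the maximal geodesics of $K_n\cp K_n$ as the ``L''-shaped triples, and show that a subset avoiding all such triples has size at most $2n-2$, with the first row plus first column (minus the corner) attaining this. The only notable difference is in the upper-bound count: the paper orders the rows by the number of $A$-cells and does a case analysis on the set of ``heavy'' rows and the columns they occupy, whereas your decomposition $A=B\cup D$ into cells that are row-isolated or column-isolated, together with the observation that $|B|=n$ (or $|D|=n$) already forces $|A|=n$, is a cleaner and more symmetric way to extract the same $2n-2$.
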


\begin{proof}
Set $R_n=K_n\cp K_n$, and note that vertices of $R_n$ can be presented in the Cartesian $n\times n$ grid such that two vertices are adjacent if and only if they belong to the same row or the same column. 

For $n=1$, the statement is clear, so let $n\ge 2$. Note that maximal geodesics $P$ in $R_n$ are of length $2$ and consist of three vertices, which can be described as follows: $(g,h)\in V(P)$, and there is a vertex $(g',h)\in V(P)$ in the same column as $(g,h)$ and a vertex $(g,h')\in V(P)$ that is in the same row as $(g,h)$. 
Let $S$ be the complement of a (smallest) $\gt$-set of $R_n$. Hence $S$ contains no maximal geodesics as just described. 

First, we prove that $|S|\le 2n-2$. Let $S_i$ be the set of vertices in $S$ that belong to the $i^{\rm th}$ row of $R_n$. Due to symmetry, we may assume that rows are ordered in such a way that $|S_1|\ge \cdots\ge |S_n|$. Note that $|S_1|=1$, implies $|S|\le n$ and we are done. Hence, let $|S_1|\ge 2$. Note that in the column in which there is a vertex of $S_1$ there are no other vertices of $S$, and the same holds for every row $S_i$ having more than one vertex in $S$. Let $k\ge 1$ be the number of rows in which there are at least two vertices in $S$, That is, in $S_i$, $i\in [k]$, we have $|S_i|\ge 2$, but if $|S_{j}|>0$, where $j>k$, then $|S_j|=1$. Let $C$ be the set of columns in which there are vertices from the sets $S_i$, where $i\in [k]$. Note that there are $|C|$ vertices of $S$ in these columns. Since in the remaining columns there are at most $n-k$ vertices from $S$ (because in each of the remaining rows there is at most one vertex in $S$), we altogether get $|S|\le |C|+n-k$. Now, if $|C|=n$, then $|S|=n$ and we are done. Otherwise, $|S|\le |C|+n-k\le (n-1)+(n-1)=2n-2$. To see that $|S|= 2n-2$, take $k=1$ with $|S_1|=n-1$, and add $n-1$ vertices in the last column to $S$. 
\end{proof}
\medskip

In the proof of Proposition~\ref{prop:gt-2dim-rook} we have reduced the search for the minimum geodesic transversal of rook's graphs to its complement. The latter is equivalent to searching for the largest number of 1-entries in a 0-1 matrix of order $n$, such that the matrix does not contain any of the four $2\times 2$ submatrices with three 1-entries. As one of the reviewers pointed out, this is known to be $2n-2$, however, we were not able to find a reference for it (as this reviewer has also failed to find). Say, in~\cite{furedi-1992}, which is one of the seminal papers on forbidden submatrices, the authors consider 0-1 matrices with four 1-entries and only have Corollary 2.4(1) on matrices with three 1-entries. We also add that the case when $2\times 2$ submatrices with four 1-entries are forbidden is (a special case of) the Zarankiewitz's problem~\cite{z-1951} which is a notorious open problem. Interestingly, it was very recently observed in~\cite[Corollary 3.7]{cicerone-2023}, that the latter problem is equivalent to determine the so-called mutual-visibility number~\cite{distefano-2022} of the rook's graphs.

Since all maximal geodesics in $K_n \cp K_n$ are of length $2$, Lemma~\ref{LUpperBoundGpack} implies that $\gpack (K_n \cp K_n) \le \frac{n^2}{3}$. We can thus estimate as follows: 
\begin{align*}
\frac{\gt(K_n \cp K_n)}{\gpack(K_n \cp K_n)} & \ge \frac{3(n^2 - 2n + 2)}{n^2} = 3\left(1 - \frac{2}{n} + \frac{2}{n^2}\right)\,. 
\end{align*}
Letting $n$ to infinity we have shown that in case the constant $C$ from Problem~\ref{pr:ratio} exists, it cannot be smaller than $3$.

In rook's graphs $K_n \cp K_n$, $n\ge 2$, every maximal geodesic is of length $2 = \diam(K_n \cp K_n)$. More generally, a graph $G$ is {\em uniform geodesic} if every maximal geodesic in $G$ is of length ${\rm diam}(G)$. Complete graphs, cycles, and paths are simple additional families of uniform geodesic graphs. The fact that rook's graphs are uniform geodesic generalizes as follows. 

\begin{prop}
	\label{prp:CP-uniform-geodesic-graphs}
If $G_1,\ldots,G_r$, $r\ge 1$, are uniform geodesic graphs, then the product $G_1 \cp \cdots \cp G_r$ is also a uniform geodesic graph. 
\end{prop}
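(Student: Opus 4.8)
The plan is to exploit the standard behaviour of geodesics under Cartesian products. Write $G = G_1 \cp \cdots \cp G_r$ and recall two basic facts: the distance in $G$ decomposes as $d_G(u,v) = \sum_{i=1}^r d_{G_i}(u_i, v_i)$, and consequently $\diam(G) = \sum_{i=1}^r \diam(G_i)$. For a path $P$ in $G$, let $\pi_i(P)$ denote its projection to the $i$-th factor, obtained by reading off the $i$-th coordinates along $P$ and deleting consecutive repetitions. Then $P$ is a geodesic of $G$ if and only if each $\pi_i(P)$ is a geodesic of $G_i$; moreover, since every edge of $G$ changes exactly one coordinate, the length of $P$ equals $\sum_{i=1}^r$ (length of $\pi_i(P)$).

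The crux is the following characterization of maximality, which I would isolate as the key lemma: a geodesic $P$ of $G$ is maximal if and only if every projection $\pi_i(P)$ is a maximal geodesic of $G_i$. For the forward direction I would argue contrapositively. If some $\pi_j(P)$ is not maximal, it can be prolonged by one vertex at an end while remaining a geodesic of $G_j$; lifting this single step to $G$ (changing only the $j$-th coordinate along an edge of $G_j$, leaving the other coordinates fixed) prolongs $P$ by one edge and leaves every projection a geodesic, hence keeps $P$ a geodesic. So $P$ was not maximal. For the converse, any prolongation of $P$ at an end changes exactly one coordinate, say the $j$-th, and therefore prolongs $\pi_j(P)$ at one of its ends while keeping it a geodesic; if every $\pi_i(P)$ is already maximal this is impossible, so $P$ admits no prolongation and is maximal.

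With the lemma in hand the conclusion is immediate. Let $P$ be any maximal geodesic of $G$. By the lemma each $\pi_i(P)$ is a maximal geodesic of $G_i$, and since $G_i$ is uniform geodesic its length is exactly $\diam(G_i)$. Summing over $i$, the length of $P$ equals $\sum_{i=1}^r \diam(G_i) = \diam(G)$, which is precisely the defining condition for $G$ to be uniform geodesic.

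I expect the main obstacle to be the careful verification of the maximality lemma — in particular, being precise about how a one-step prolongation at an end of a factor geodesic lifts to a legitimate prolongation of $P$ in $G$, and conversely how every end-prolongation of $P$ projects to a prolongation of a single factor geodesic. The distance and diameter formulae, together with the geodesic--projection characterization, are standard for Cartesian products and can simply be quoted; the only genuinely new ingredient is translating the notion of maximality through the projections. One could alternatively reduce to the case $r=2$ by associativity of the Cartesian product and then induct on $r$, but the direct projection argument treats all $r$ uniformly and seems cleaner.
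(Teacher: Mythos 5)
Your proof is correct and follows essentially the same route as the paper's: both arguments rest on the facts that projections of a maximal geodesic to the factors are maximal geodesics (proved by lifting a one-step extension of a non-maximal projection to an extension of $P$) and that distance, hence diameter, is additive over the Cartesian product. The only cosmetic differences are that the paper reduces to two factors via associativity while you handle all $r$ factors at once, and you prove both directions of the maximality--projection equivalence where only one is needed.
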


\proof
The result clearly holds for $r=1$. Moreover, by the associativity of the Cartesian product, it suffices to prove the lemma for two factors. Let hence $P$ be an arbitrary maximal geodesic in $G\cp H$. Then the projections $P_G$ and $P_H$ of $P$ on $G$ and on $H$ are geodesics in $G$ and $H$, respectively. If $P_G$ is not maximal in $G$, then $P_G$ can be extended to a longer geodesic in $G$, but then also $P$ can be extended to a longer geodesic  in $G\cp H$, a contradiction. So $P_G$ and $P_H$ are maximal geodesics in $G$ and $H$, respectively. By our assumption this means that the lengths of $P_G$ and $P_H$ are ${\rm diam}(G)$ and ${\rm diam}(H)$, respectively. As the distance function is additive in Cartesian products, it follows that the length of $P$ is ${\rm diam}(G) + {\rm diam}(H) = {\rm diam}(G\cp H)$. 
\qed
\medskip

Proposition~\ref{prp:CP-uniform-geodesic-graphs}, Lemma~\ref{LUpperBoundGpack}, and the fact that the diameter is also additive on Cartesian products, yield the following result. 

\begin{cor}
If $G_1,\ldots,G_r$, $r\ge 1$, are uniform geodesic graphs, then
$$\gpack(G_1\cp \cdots \cp G_r) \le \left\lfloor \frac{n(G_1)\cdots n(G_r)}{\diam(G_1) + \cdots +\diam(G_r) + 1} \right\rfloor\,.$$
\end{cor}

\section{Trees}
\label{sec:trees}

In this section we derive an efficient algorithm to obtain the geodesic packing number of an arbitrary tree. The approach used is in part similar to the approach from~\cite{MaBr21} to determine the geodetic transversal number of a tree. 

In this section we apply the “\textit{smoothing}” operation on vertices of degree $2$, which is formally defined as follows. Let  $xuy$ be a path of length $2$ in $G$ such that the degree of vertex $u$ in $G$ is $2$. Then a new graph $\SM(G)$ is obtained from $G$ by removing the vertex $u$ and adding the edge $xy$. When there exist two adjacent 2-degree vertices in G, this operation is carried out sequentially one after another.
Let further $\SM(G)$ denote the graph obtained from $G$ by smoothing all the vertices of $G$ of degree $2$. In the smoothing operation, a path $xuy$ is replaced by an edge $xy$ when $\deg(u)=2$ and thus $\deg_{\SM(G)}(v) = \deg_{G}(v)$ for every vertex $v$ in $G'$. Since the smoothing operation preserves the degree of vertices, $\SM(G)$ is well-defined, that is, unique up to isomorphism. It was proved in~\cite[Lemma 4.2]{MaBr21} that $\gt(T) = \gt(\SM(T))$ in any tree $T$. We prove a similar result for the packing invariant. 

\begin{lem}
	\label{lem:treesmoothing}
	If $T$ is a tree, then $\gpack(T) = \gpack(\SM(T))$.
\end{lem}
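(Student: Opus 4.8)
The plan is to exploit the fact that, in any tree, the maximal geodesics are precisely the paths joining two leaves: a geodesic in a tree is the unique path between its endpoints, and it is maximal exactly when it cannot be prolonged, i.e.\ when both of its endpoints are leaves. Since smoothing a degree-$2$ vertex leaves the degree of every other vertex unchanged and keeps the graph a tree, $\SM(T)$ is again a tree with the \emph{same} leaf set as $T$. Consequently there is a natural bijection $\phi$ between the maximal geodesics of $T$ and those of $\SM(T)$: the leaf-to-leaf path with endpoints $a,b$ in $T$ is sent to the leaf-to-leaf path with the same endpoints $a,b$ in $\SM(T)$. Both families are indexed by the same unordered pairs of leaves, so $\phi$ is a bijection at the level of single maximal geodesics.

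The heart of the argument is to show that $\phi$ preserves vertex-disjointness in both directions, whence it maps geodesic packings to geodesic packings of the same cardinality. I would organize the bookkeeping via the \emph{segments} of $T$, i.e.\ the maximal subpaths all of whose internal vertices have degree $2$; each such segment collapses to a single edge of $\SM(T)$, its two endpoints are leaves or branch vertices (exactly the vertices surviving in $\SM(T)$), and distinct segments meet only in such surviving vertices. The forward implication is then immediate: deleting the degree-$2$ vertices can only destroy intersections, so if $P_1,\dots,P_k$ are pairwise vertex-disjoint in $T$, their images are pairwise vertex-disjoint in $\SM(T)$.

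The converse is where the real work lies, and I expect it to be the main obstacle. Suppose $\phi(P_1)$ and $\phi(P_2)$ are vertex-disjoint in $\SM(T)$ but $P_1,P_2$ share a vertex $w$ in $T$. If $w$ is a leaf or a branch vertex, it survives in $\SM(T)$ and already lies on both images, a contradiction. If $w$ has degree $2$, the key observation is that any maximal geodesic through $w$ must traverse the \emph{entire} segment containing $w$: since $w$ is not an endpoint (endpoints are leaves) and has degree $2$, both of its neighbours lie on the geodesic, and iterating this forces the geodesic to run through the whole segment up to its two surviving endpoints. Hence both $\phi(P_1)$ and $\phi(P_2)$ use the edge of $\SM(T)$ corresponding to that segment, and therefore share its endpoints, again contradicting disjointness in $\SM(T)$.

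Combining the two implications, a set of maximal geodesics in $T$ is a geodesic packing if and only if its $\phi$-image is a geodesic packing in $\SM(T)$, and the correspondence preserves cardinality; therefore $\gpack(T)=\gpack(\SM(T))$. The only degenerate situation worth checking separately is when $T$ is a path, where $\SM(T)=K_2$ and both invariants equal $1$, in agreement with the general argument.
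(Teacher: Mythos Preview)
Your proposal is correct and follows essentially the same approach as the paper: both arguments rest on the bijection between maximal geodesics of $T$ and of $\SM(T)$ given by their common pairs of leaf endpoints, and both check that this bijection preserves vertex-disjointness in each direction. Your treatment is in fact more careful on the harder direction (disjoint in $\SM(T)$ $\Rightarrow$ disjoint in $T$), which the paper dismisses as ``clear''; your segment argument is exactly the right way to justify that step.
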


\proof
Note that each maximal geodesic in a tree connects two leaves of the tree. Let $\Psi_T$ be a largest geodesic packing in $T$. Its elements can thus be represented by pairs of leaves that are endvertices of the corresponding geodesics. Note that a maximal geodesic in $\Psi_T$ from which we remove all vertices of degree $2$ becomes a maximal geodesic in $\SM(T)$.  Thus the same pairs of leaves can be used in $\SM(T)$ to represent the maximal geodesics by its end-vertices. We denote by $\SM(\Psi_T)$ the resulting set of maximal geodesics in $\SM(T)$. Since any two geodesics $g_1,g_2\in \Psi_T$ are disjoint, so are also the corresponding geodesics in $\SM(\Psi_T)$. This implies that $\gpack(T)\le \gpack(\SM(T))$. The reversed inequality can be proved in a similar way. Notably, since the maximal geodesics in $\SM(T)$ have two leaves of $\SM(T)$ as its end-vertices, the same two leaves are end-vertices of a maximal geodesic in $T$. It is clear that the resulting maximal geodesics in $T$ are also mutually vertex-disjoint, and thus together form a geodesic packing in $T$ of cardinality $\gpack(\SM(T))$. Thus, $\gpack(T)\ge \gpack(\SM(T))$.
\qed
\medskip

Lemma~\ref{lem:treesmoothing} does not hold for an arbitrary graph $G$. See Fig.~\ref{fig:CounterExSM}, where a graph $G$ is shown for which we have $\gpack(G) = 4$ and $\gpack(\SM(G)) = 3$. Pairs of endvertices of maximal geodesics are marked by distinct colors.  

\begin{figure}[ht!]
\begin{center}
\begin{tikzpicture}[scale=1,style=thick,x=1cm,y=1cm]
\def\vr{3pt}

\begin{scope}
\coordinate(1) at (0,0);
\coordinate(2) at (0.5,1);
\coordinate(3) at (0.9,0.1);
\coordinate(4) at (0.5,-1);
\coordinate(5) at (1.3,-2);
\coordinate(6) at (2.7,-2);
\coordinate(7) at (3.5,-1);
\coordinate(8) at (3.1,0.1);
\coordinate(9) at (4,0);
\coordinate(10) at (3.5,1);
\coordinate(11) at (2,0.8);
\coordinate(12) at (2.5,1.8);
\coordinate(13) at (1.5,1.8);
\draw (1) -- (3) -- (2) -- (3); 
\draw (3) -- (4) -- (5) -- (6) -- (7) -- (8) -- (11) -- (3);
\draw (13) -- (11) -- (12);
\draw (10) -- (8) -- (9);
\foreach \i in {1,2,...,13}
{
\draw(\i)[fill=white] circle(\vr);
}
\foreach \i in {1,2}
{
\draw(\i)[fill=green] circle(\vr);
}

\foreach \i in {9,10}
{
\draw(\i)[fill=blue] circle(\vr);
}

\foreach \i in {13,12}
{
\draw(\i)[fill=black] circle(\vr);
}
\foreach \i in {4,7}
{
\draw(\i)[fill=magenta] circle(\vr);
}

\end{scope}
		
\begin{scope}[xshift=6cm, yshift=-0.7cm]
\coordinate(1) at (0.3,-0.7);
\coordinate(2) at (0.3,0.7);
\coordinate(3) at (1,0);
\coordinate(4) at (2,1);
\coordinate(5) at (1.5,2);
\coordinate(6) at (2.5,2);
\coordinate(7) at (3,0);
\coordinate(8) at (3.7,0.7);
\coordinate(9) at (3.7,-0.7);
\draw (1) -- (3) -- (2); 
\draw (3) -- (7) -- (4) -- (5);
\draw (5) -- (4) --(3); \draw (4) --(6); 
\draw (8) -- (7) -- (9);
\foreach \i in {1,2,...,9}
{
\draw(\i)[fill=white] circle(\vr);
}

\foreach \i in {1,2}
\draw(\i)[fill=green] circle(\vr);

\foreach \i in {9,8}
{
\draw(\i)[fill=blue] circle(\vr);
}

\foreach \i in {5,6}
{
\draw(\i)[fill=black] circle(\vr);
}

\end{scope}

\end{tikzpicture}
	\caption{A graph $G$ with ${\gpack}(G) = 4$,  and $\SM(G)$ with ${\gpack}(\SM(G))=3$.}
	\label{fig:CounterExSM}
\end{center}
\end{figure}

A {\em support vertex} in a tree is a vertex adjacent to a leaf.  An {\em end support vertex} is a support vertex that has at most one non-leaf neighbor. It is easy to see that an end support vertex does not lie between two end support vertices. In addition, every tree on at least two vertices contains an end support vertex (see, for instance,~\cite{MaBr21}). In~\cite[Lemma 4.3]{MaBr21} the following result was proved. 

\begin{lem}\hskip 0.1cm {\rm \cite{MaBr21}}
\label{lem:treetransversal}
Let $T$ be a tree with no vertices of degree $2$. Let $u$ be an end support vertex of $T$ and $u_1, \ldots, u_s$ the leaves adjacent to $u$. Then $\gt(T) = \gt(T -\{u, u_1, \ldots, u_s \})+1$. Moreover, there exists a gt-set $S$ of $T$ such that $u\in S$.
\end{lem}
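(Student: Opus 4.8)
The plan is to establish Lemma~\ref{lem:treetransversal} by a two-step argument: first exhibit a $\gt$-set of $T$ of size $\gt(T - \{u, u_1, \ldots, u_s\}) + 1$ that contains $u$, giving the upper bound $\gt(T) \le \gt(T') + 1$ where $T' = T - \{u, u_1, \ldots, u_s\}$; then prove the matching lower bound $\gt(T) \ge \gt(T') + 1$. Throughout I would use the standing hypothesis that $T$ has no vertices of degree $2$, together with the recalled facts that every maximal geodesic in a tree runs between two leaves, and that an end support vertex $u$ has at most one non-leaf neighbor, call it $w$.

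For the upper bound, I would start from an optimal $\gt$-set $S'$ of $T'$ and show that $S' \cup \{u\}$ is a geodesic transversal of $T$. The key observation is that $u$ hits every maximal geodesic of $T$ that uses any of the newly removed vertices: a maximal geodesic through a leaf $u_i$ must pass through its unique neighbor $u$, and any leaf-to-leaf geodesic that enters the pendant ``star'' region around $u$ is forced through $u$. Every maximal geodesic of $T$ that avoids $\{u, u_1, \ldots, u_s\}$ restricts to a maximal geodesic of $T'$ (here I must check that maximality is preserved, which is where the degree conditions and the structure of $T'$ near $w$ enter), hence is hit by $S'$. Combining, $S' \cup \{u\}$ covers all maximal geodesics of $T$.

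The lower bound is the crux, and I expect it to be the main obstacle. The goal is to produce, from any $\gt$-set $S$ of $T$, a geodesic transversal of $T'$ of size $|S| - 1$; equivalently, to show $S$ must ``spend'' at least one vertex inside the pendant region $\{u, u_1, \ldots, u_s\}$ that contributes nothing to covering geodesics of $T'$. The delicate point is that $S$ need not contain $u$ itself, and a clever $\gt$-set might try to cover the pendant maximal geodesics cheaply by placing vertices on leaves or reusing vertices that also lie on geodesics of $T'$. I would argue that because $u$ is an end support vertex with at least one leaf $u_1$, there is a maximal geodesic lying entirely within $\{u_1, u, \ldots\}$ (for instance a leaf-to-leaf geodesic whose only vertices outside $T'$ force a transversal vertex among $\{u, u_1, \ldots, u_s\}$); such a geodesic can only be hit by a vertex of the pendant region. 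I would then either show $u$ can be assumed to lie in $S$ (exchange argument: replace any transversal vertex used inside the pendant star by $u$ without losing coverage, which is where the ``moreover'' clause is proved), and that after deleting $u$ the set $S \setminus \{u\}$ transversally covers all maximal geodesics of $T'$ — using that every maximal geodesic of $T'$ extends to, or coincides with, a maximal geodesic of $T$ not relying on the pendant vertices for its hit.

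The main subtlety to handle carefully is the interface vertex $w$ (the unique non-leaf neighbor of $u$): one must verify that maximal geodesics of $T'$ correspond correctly to maximal geodesics of $T$ across the deletion, that no maximal geodesic of $T'$ becomes uncovered merely because its hitting vertex in $S$ was one of the deleted pendant vertices, and that the exchange argument producing $u \in S$ does not inadvertently leave some maximal geodesic of $T$ uncovered. Since $T$ has no degree-$2$ vertices, $w$ has degree at least $3$, which prevents the pendant region from being an artifact of a subdivided edge and keeps the two leaf-sets (those of $T$ and those of $T'$) in clean correspondence away from $u$; this structural rigidity is what makes both inequalities tight and is the feature I would lean on hardest.
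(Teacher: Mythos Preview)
The paper does not supply its own proof of this lemma; it is quoted verbatim from~\cite{MaBr21} (as Lemma~4.3 there) and stated without argument, so there is nothing in the present paper to compare your proposal against.

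That said, your outline is essentially correct and is the natural proof. Two small points are worth making explicit so the argument closes cleanly. First, for the upper bound you need that every maximal geodesic of $T$ avoiding $\{u,u_1,\ldots,u_s\}$ is still maximal in $T'$: this holds because maximal geodesics in a tree are exactly leaf-to-leaf paths, and since $w$ has degree at least $3$ in $T$ (no degree-$2$ vertices, and $w$ is a non-leaf), it retains degree at least $2$ in $T'$, so the leaf set of $T'$ is precisely the leaf set of $T$ minus $\{u_1,\ldots,u_s\}$. Second, for the lower bound, once you exchange to get $u\in S$, the transversal of $T'$ you want is $(S\setminus\{u\})\cap V(T')$, not $S\setminus\{u\}$ itself; any extra $u_j$'s left in $S$ simply drop out and only help the inequality. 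Every maximal geodesic of $T'$ is, by the same leaf-correspondence, a maximal geodesic of $T$ lying entirely in $V(T')$, hence is hit by $S$ at a vertex of $V(T')\setminus\{u\}$. With those two observations spelled out, your two-step plan goes through without difficulty; the ``main subtlety'' you flag at $w$ is exactly handled by the no-degree-$2$ hypothesis.
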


We prove a result parallel to Lemma~\ref{lem:treetransversal} concerning the geodesic packing number. 
\begin{lem}
\label{lem:treepacking}
Let $T$ be a tree with no vertices of degree $2$. Let $u$ be an end support vertex of $T$ and $u_1, \ldots, u_s$ the leaves adjacent to $u$. Then $\gpack(T) = \gpack(T - \{u, u_1, \ldots, u_s \})+1$. Moreover, there exists a $\gpack$-set $\Psi$ of $T$ such that $u_1uu_2\in \Psi$.
\end{lem}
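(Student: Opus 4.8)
The plan is to establish the two inequalities $\gpack(T) \ge \gpack(T') + 1$ and $\gpack(T) \le \gpack(T') + 1$ separately, writing $T' = T - \{u, u_1, \ldots, u_s\}$, and to extract the ``moreover'' clause directly from the lower-bound construction. Both directions rest on a single structural fact: the maximal geodesics of $T'$ are exactly the maximal geodesics of $T$ that avoid $\{u, u_1, \ldots, u_s\}$. To justify this I would first record that, since $T$ has no vertex of degree $2$ and $u$ is an end support vertex, $u$ must have at least two leaf-neighbors (so that $u_1 u u_2$ is a genuine maximal geodesic) and at most one non-leaf neighbor $w$, which when it exists satisfies $\deg_T(w) \ge 3$. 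If $w$ does not exist then $T$ is a star and the claim is immediate, so assume $w$ exists. Deleting $u$ together with its pendant leaves lowers $\deg_T(w)$ by exactly one, so $w$ remains a non-leaf in $T'$ and no new leaf is created; hence the leaves of $T'$ are precisely the leaves of $T$ other than $u_1, \ldots, u_s$. Since a maximal geodesic in a tree is exactly a path joining two leaves, and the unique path between two such leaves is unaffected by the deletion, the correspondence follows.

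For the lower bound I would take a $\gpack$-set $\Psi'$ of $T'$ and use the correspondence to view each of its members as a maximal geodesic of $T$ avoiding $\{u, u_1, \ldots, u_s\}$. Then $\Psi' \cup \{u_1 u u_2\}$ is a geodesic packing of $T$ of cardinality $\gpack(T') + 1$, which yields $\gpack(T) \ge \gpack(T') + 1$ and at the same time exhibits a $\gpack$-set of $T$ containing $u_1 u u_2$, settling the ``moreover'' statement.

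For the upper bound I would start from an arbitrary $\gpack$-set $\Psi$ of $T$. The key observation is that each $u_i$ is a leaf whose only neighbor is $u$, so any geodesic of $\Psi$ meeting $\{u, u_1, \ldots, u_s\}$ must pass through $u$; as the geodesics of $\Psi$ are vertex-disjoint, at most one of them uses $u$. At least one must, for otherwise no geodesic of $\Psi$ would meet $\{u, u_1, \ldots, u_s\}$ and $\Psi \cup \{u_1 u u_2\}$ would be a strictly larger packing, contradicting the maximality of $\Psi$. Letting $P$ be the unique geodesic of $\Psi$ through $u$, every geodesic of $\Psi \setminus \{P\}$ avoids $\{u, u_1, \ldots, u_s\}$ and hence, by the correspondence, is a maximal geodesic of $T'$; these stay pairwise disjoint, so $\Psi \setminus \{P\}$ is a geodesic packing of $T'$. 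Therefore $\gpack(T) = |\Psi| = |\Psi \setminus \{P\}| + 1 \le \gpack(T') + 1$, and combining with the lower bound gives the claimed equality.

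The step requiring genuine care, and the only place the hypothesis that $T$ has no vertex of degree $2$ is used, is the correspondence in the first paragraph: I must be certain that removing $u$ and its leaves neither creates a new leaf (which could produce a maximal geodesic of $T'$ absent from, or non-maximal in, $T$) nor shortens any path between two surviving leaves. Once the leaf set of $T'$ is pinned down exactly, both inequalities reduce to routine bookkeeping with vertex-disjoint leaf-to-leaf paths.
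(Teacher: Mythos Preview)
Your proof is correct and follows essentially the same two-inequality approach as the paper, with the same lower-bound construction $\Psi'\cup\{u_1uu_2\}$ and the same upper-bound argument that exactly one geodesic of a $\gpack$-set passes through $u$. You are in fact more explicit than the paper in justifying the key correspondence (checking that $\deg_T(w)\ge 3$ so that $w$ does not become a leaf of $T'$); one tiny presentational note is that your ``moreover'' claim only becomes a genuine $\gpack$-set once the upper bound has also been established, so you should defer that sentence until after both inequalities are proved.
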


\proof
Since $T$ has no vertices of degree $2$, the end support vertex $u$ is adjacent to at least two leaves, that is, $s\ge 2$. If $T$ is a star, and hence $u$ being the center of it, then the assertion of the lemma is clear. In the rest of the proof we may thus assume that $u$ has at least one non-leaf neighbor,
and since $u$ is an end support vertex, it has only one non-leaf neighbor. We denote the latter vertex by $w$, and let $T'$ be the component of $T-u$ that contains the vertex $w$. 

Let $\Psi'$ be a $\gpack$-set of $T'$. Since $u_1uu_2$ is a maximal geodesic in $T$, and every maximal geodesic in $T'$ is a maximal geodesic also in $T$, we infer that $\Psi'\cup\{u_1uu_2\}$ is a geodesic packing of $T$. Hence $\gpack(T)\ge \gpack(T')+1$.

Note that there can be at most one maximal geodesic in a geodesic packing of $T$ that contains vertex $u$. In addition, there is at least one geodesic that contains $u$ if a geodesic packing of $T$ is of maximum cardinality (for otherwise, one could add the geodesic $u_1uu_2$ and make it of larger cardinality, which is a contradiction). Now, let $\Psi$ be a $\gpack$-set of $T$ and let $P\in \Psi$ be the geodesic that contains $u$. It is easy to see that all maximal geodesics in $\Psi\setminus\{P\}$ belong to $T'$ and are also pairwise vertex-disjoint maximal geodesics of $T'$. Hence $\gpack(T')\ge \gpack(T)-1$, and we are done. 
\qed
\medskip

Combining the facts that $\gpack(K_2)=1=\gt(K_2)$, that in any tree $T$ we have $\gt(T) = \gt(\SM(T))$ and $\gpack(T) = \gpack(\SM(T))$, and using Lemmas~\ref{lem:treetransversal} and \ref{lem:treepacking}, we deduce the following result.

\begin{thm}
\label{thm:treeequality}
If $T$ is a tree, then $\gpack(T)=\gt(T)$.
\end{thm}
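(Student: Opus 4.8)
The plan is to prove the equality $\gpack(T) = \gt(T)$ by induction, leveraging the two key facts already established: that smoothing degree-$2$ vertices preserves both invariants (so we may reduce to trees with no vertices of degree $2$), and that both invariants decrease by exactly $1$ when we delete an end support vertex together with its adjacent leaves. Since we already know $\gpack(T) \le \gt(T)$ from Lemma~\ref{lem:CLowBound}, the real content is to show the reverse inequality, and the symmetric recursive structure of Lemmas~\ref{lem:treetransversal} and~\ref{lem:treepacking} makes this essentially automatic.

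First I would set up induction on $n(T)$. For the base case, a tree on one or two vertices satisfies the claim directly: for $K_2$ we have $\gpack(K_2) = 1 = \gt(K_2)$, and a single vertex is trivial. For the inductive step, I would first replace $T$ by $\SM(T)$: by Lemma~\ref{lem:treesmoothing} we have $\gpack(T) = \gpack(\SM(T))$, and by~\cite[Lemma 4.2]{MaBr21} we have $\gt(T) = \gt(\SM(T))$, so it suffices to prove the equality for $\SM(T)$, which has no vertices of degree $2$. Thus I may assume from the outset that $T$ itself has no degree-$2$ vertices.

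Next, since $T$ has at least two vertices and no degree-$2$ vertices, it contains an end support vertex $u$; let $u_1,\ldots,u_s$ be the leaves adjacent to $u$. Applying Lemma~\ref{lem:treetransversal} gives $\gt(T) = \gt(T - \{u,u_1,\ldots,u_s\}) + 1$, and applying Lemma~\ref{lem:treepacking} gives $\gpack(T) = \gpack(T - \{u,u_1,\ldots,u_s\}) + 1$. The tree $T' = T - \{u,u_1,\ldots,u_s\}$ has strictly fewer vertices than $T$, so the induction hypothesis yields $\gpack(T') = \gt(T')$. Combining the three equalities gives
\[
\gpack(T) = \gpack(T') + 1 = \gt(T') + 1 = \gt(T)\,,
\]
which closes the induction.

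The one point requiring care, and the place I expect to be the main obstacle, is that $T'$ need not satisfy the "no degree-$2$ vertices" hypothesis even though $T$ does: removing $u$ and its leaves can create new vertices of degree $2$ (for instance, the neighbor $w$ of $u$ may drop in degree). This means the induction hypothesis should be stated for \emph{all} trees, not just those without degree-$2$ vertices, and the reduction to $\SM(T)$ must be performed afresh at each step rather than once at the beginning. Organizing the induction so that every step begins by smoothing, then deletes an end support vertex, then invokes the hypothesis on the smaller tree, is the clean way to handle this; alternatively one can induct on $n(\SM(T))$ directly. Once this bookkeeping is set up correctly, no further estimates are needed.
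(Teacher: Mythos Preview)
Your proposal is correct and follows essentially the same route as the paper: the paper's proof is a one-line remark that combines the base case $\gpack(K_2)=1=\gt(K_2)$, the smoothing lemmas $\gt(T)=\gt(\SM(T))$ and $\gpack(T)=\gpack(\SM(T))$, and the parallel recursions of Lemmas~\ref{lem:treetransversal} and~\ref{lem:treepacking}, which is exactly the induction you spell out. Your explicit observation that $T'$ may acquire degree-$2$ vertices, so the induction must range over all trees with a fresh smoothing at each step, is a useful clarification that the paper leaves implicit.
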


Using the lemmas from this section, we can now present an algorithm that constructs a $\gpack$-set of an arbitrary tree $T$. Note that a $\gpack$-set of $T$ is uniquely determined by pairs of endvertices of its maximal geodesics, and the outcome of the algorithm is the set of such (ordered) pairs.

\begin{algorithm}[hbt!]
\label{al:tree}
\caption{ $\gpack$-set of a tree}
\label{alg:gt-set-tree} 
\KwIn{A tree $T$.}
\KwOut{A $\gpack$-set $\Psi$, represented by pairs of end-vertices.}

\BlankLine
{
$\Psi=\emptyset$\\
$T=\SM(T)$ \\
\While{$n(T) \ge 3$}
{
 identify an end support vertex $p$ of $\SM(T)$, and its leaf-neigbors $u_1,u_2$\\
 $\Psi=\Psi\cup\{(u_1,u_2)\}$\\
 $T=T-\{p,u_1,\ldots,u_t\}$, where $u_1,\ldots,u_t$ are the leaf neighbors of $p$\\ 
 $T=\SM(T)$ \\
}
\If{$n(T) = 2$}
{$\Psi=\Psi \cup V(T)$}
}
\end{algorithm}

\begin{thm}
\label{thm:tree}
Given a tree $T$, Algorithm~\ref{al:tree} returns the set of pairs of end vertices of maximal geodesics of a $\gpack$-set of $T$ in linear time.  
\end{thm}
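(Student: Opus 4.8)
The plan is to prove two things simultaneously: that Algorithm~\ref{al:tree} correctly returns the endvertices of a $\gpack$-set of $T$, and that it runs in linear time. Correctness will follow from the lemmas already established in this section, essentially by induction on the number of iterations of the main loop. Linear running time will require a careful argument about data structures and amortized cost, since each iteration involves both a vertex-deletion step and a re-smoothing step.

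For correctness, first I would invoke Lemma~\ref{lem:treesmoothing} to justify the initial replacement $T=\SM(T)$: since $\gpack(T)=\gpack(\SM(T))$, and by the proof of that lemma the maximal geodesics of $T$ and $\SM(T)$ correspond via their pairs of leaf endvertices, working with the smoothed tree loses nothing, and the endvertex pairs the algorithm records are valid endvertex pairs in the original tree. Then I would argue by induction on $n(T)$ that the set of pairs produced is a $\gpack$-set. At each iteration the algorithm identifies an end support vertex $p$ (guaranteed to exist in any tree on at least two vertices), records the pair $(u_1,u_2)$ of two of its leaf-neighbors, and deletes $p$ together with all its leaf-neighbors. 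By Lemma~\ref{lem:treepacking}, $\gpack(T)=\gpack(T-\{p,u_1,\ldots,u_t\})+1$, and moreover there is a $\gpack$-set of $T$ containing the geodesic $u_1pu_2$; so adding the pair $(u_1,u_2)$ to an optimal packing of the smaller tree yields an optimal packing of $T$. Crucially, after the deletion the tree $T-\{p,u_1,\ldots,u_t\}$ may create new degree-$2$ vertices, which is exactly why the loop re-applies $\SM$; Lemma~\ref{lem:treesmoothing} again guarantees this preserves $\gpack$ and the endvertex correspondence. The loop terminates when $n(T)\le 2$; the leftover $K_2$ contributes one more geodesic (its two vertices), handled by the final \texttt{if}, consistent with $\gpack(K_2)=1$.

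For the linear time bound — which I expect to be the main obstacle — the subtle point is that naive re-smoothing of the whole tree at every iteration could cost $\Theta(n)$ per iteration and hence $\Theta(n^2)$ overall. The key observation is that smoothing only needs to be applied locally: after deleting $p$ and its leaf-neighbors, the only vertex whose degree can drop to $2$ is the unique non-leaf neighbor $w$ of $p$ (and, cascading, possibly a short chain of vertices along the path away from $w$). So I would argue that the total work of all smoothing operations across the entire run is bounded by the number of vertices ever smoothed, which is at most $n(T)$, since each vertex is smoothed at most once before being removed from further consideration. Similarly, finding an end support vertex need not rescan the whole tree each time; I would maintain a queue or stack of candidate end support vertices and update it locally after each deletion, charging the update cost to the deleted vertices. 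Under this amortized accounting, each vertex and each edge is touched a constant number of times over the whole algorithm, giving the claimed $O(n(T))$ total running time.

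In writing this up I would be careful to state precisely the data-structure invariants maintained across loop iterations — namely an adjacency representation supporting constant-time degree queries and neighbor access, together with a dynamically maintained list of current end support vertices and of degree-$2$ vertices pending smoothing. The correctness induction is routine once the lemmas are in hand; the genuine content of the theorem is the linear-time claim, so the bulk of the proof should be the amortized analysis establishing that the cumulative cost of all smoothing and all end-support-vertex identifications is $O(n(T))$ rather than $O(n(T)^2)$.
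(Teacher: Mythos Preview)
Your proposal is correct and follows essentially the same approach as the paper: correctness via Lemmas~\ref{lem:treesmoothing} and~\ref{lem:treepacking}, and linear time via the observation that after deleting $p$ and its leaves only the unique non-leaf neighbor $w$ of $p$ can acquire degree~$2$. One simplification you missed: there is no cascading, because smoothing $w$ replaces the two edges at $w$ by a single edge between its two neighbors, leaving their degrees unchanged---so at most one smoothing operation is needed per iteration, and your amortized machinery, while valid, is heavier than necessary. Your attention to maintaining the set of end support vertices is a point the paper glosses over; your treatment there is actually more careful than the paper's.
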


The correctness of Algorithm \ref{alg:gt-set-tree} follows from Lemmas~\ref{lem:treesmoothing} and \ref{lem:treepacking}. The time complexity of the algorithm is clearly linear. For the running time of the algorithm, in Step 7, there is nothing to be done if $T$ is a star. Otherwise, the unique non-leaf neighbor of the vertex $p$ selected in Step 4 is the only vertex for which we need to check whether the smoothing operation is required.

\section{Diagonal grids}
\label{sec:diagonal-grid}

{\em Diagonal grids} are strong products of paths~\cite{HIK-2011}. If a diagonal grid is the strong product of $r$ paths, then it is called an {\em $r$-dimensional diagonal grid}. By definition, the $r$-dimensional grid $P_{d_1} \cp  \cdots \cp P_{d_r}$  is a spanning subgraph of $P_{d_1} \strp \cdots \strp P_{d_r}$, cf.\ Fig.~\ref{fig:GridDiagonalGrid}. The edges of $P_{d_1} \cp \cdots \cp P_{d_r}$ (considered as a subgraph of  $P_{d_1} \strp \cdots \strp P_{d_r}$) are called {\em Cartesian edges} of $P_{d_1} \strp \cdots \strp P_{d_r}$, the other edges are {\em diagonal edges}. We say that a geodesic consisting of only Cartesian edges is a {\em Cartesian geodesic} of $P_{d_1} \strp \cdots \strp P_{d_r}$. In the rest we will assume that the vertices of a path on $r$ vertices are integers $1,\ldots, r$, and if $x\in V(P_{d_1} \strp \cdots \strp P_{d_r})$, then we will use the notation $x = (x_1,\ldots, x_r)$. 
\begin{figure}[ht!]
	\begin{center}
	\begin{tikzpicture}
	\def\vr{3pt}
	\foreach \x in {1,...,5}
	\foreach \y in {1,...,4}
	{
		\draw[thick](\x,\y) +(-.5,-.5)  rectangle ++(.5,.5);
		\draw[thick](\x,\y) +(-.5,-.5)[fill=white] circle(\vr);
	}
	\draw[thick](6,1) +(-.5,-.5)[fill=white] circle(\vr);
	\draw[thick](6,2) +(-.5,-.5)[fill=white] circle(\vr);
	\draw[thick](6,3) +(-.5,-.5)[fill=white] circle(\vr);
	\draw[thick](6,4) +(-.5,-.5)[fill=white] circle(\vr);
	\draw[thick](6,5) +(-.5,-.5)[fill=white] circle(\vr);
	\draw[thick](1,5) +(-.5,-.5)[fill=white] circle(\vr);
	\draw[thick](2,5) +(-.5,-.5)[fill=white] circle(\vr);
	\draw[thick](3,5) +(-.5,-.5)[fill=white] circle(\vr);
	\draw[thick](4,5) +(-.5,-.5)[fill=white] circle(\vr);
	\draw[thick](5,5) +(-.5,-.5)[fill=white] circle(\vr);
\end{tikzpicture}
\begin{tikzpicture}
	\def\vr{3pt}
	\foreach \x in {1,...,5}
	\foreach \y in {1,...,4}
	{
		\draw[thick](\x,\y) +(-.5,-.5)  rectangle ++(.5,.5);
		\draw[thick](\x,\y) +(-.5,-.5) --  ++(.5,.5);
		\draw[thick](\x,\y) +(-.5,.5) --  ++(.5,-.5);
		\draw[thick](\x,\y) +(-.5,-.5)[fill=white] circle(\vr);
	}
	\draw[thick](6,1) +(-.5,-.5)[fill=white] circle(\vr);
	\draw[thick](6,2) +(-.5,-.5)[fill=white] circle(\vr);
	\draw[thick](6,3) +(-.5,-.5)[fill=white] circle(\vr);
	\draw[thick](6,4) +(-.5,-.5)[fill=white] circle(\vr);
	\draw[thick](6,5) +(-.5,-.5)[fill=white] circle(\vr);
	\draw[thick](1,5) +(-.5,-.5)[fill=white] circle(\vr);
	\draw[thick](2,5) +(-.5,-.5)[fill=white] circle(\vr);
	\draw[thick](3,5) +(-.5,-.5)[fill=white] circle(\vr);
	\draw[thick](4,5) +(-.5,-.5)[fill=white] circle(\vr);
	\draw[thick](5,5) +(-.5,-.5)[fill=white] circle(\vr);
\end{tikzpicture}
	\caption{(a) A $2$-dimensional grid $P_6\cp P_5$  and (b) a $2$-dimensional diagonal grid $P_6\strp P_5$}
\label{fig:GridDiagonalGrid}
\end{center}
\end{figure}
\begin{lem}
	\label{LLengthGeoDiaGrid}
	If $P$ is a maximal geodesic in  $P_{d_1} \boxtimes \cdots \boxtimes P_{d_r}$, where $r\ge 2$, and $d_1, \ldots, d_r\ge 2$, then $n(P) \in \{d_1, \ldots, d_r\}$.
\end{lem}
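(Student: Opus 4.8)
The plan is to reduce everything to the explicit distance formula in a strong product of paths. For vertices $x=(x_1,\ldots,x_r)$ and $y=(y_1,\ldots,y_r)$ one has $d(x,y)=\max_{i\in[r]}|x_i-y_i|$, since along a diagonal edge all coordinates may be advanced by one step simultaneously. Consequently every geodesic $P$ joining $x$ and $y$ has length $L:=\max_i|x_i-y_i|$, so that $n(P)=L+1$, and the whole task becomes showing that $L+1$ coincides with some $d_j$ whenever $P$ is maximal.

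First I would characterise when a geodesic can be prolonged at an endpoint. Let $M=\{i\in[r]:|x_i-y_i|=L\}$ be the set of coordinates realising the maximum; $M\ne\emptyset$ by definition, and $L\ge 1$ because in a connected graph on at least two vertices a single vertex is never a maximal geodesic. A vertex $x'$ adjacent to $x$ prolongs $P$ iff $d(x',y)=L+1$. Writing $d(x',y)=\max_i|x_i'-y_i|$ and using $|x_i'-y_i|\le|x_i'-x_i|+|x_i-y_i|\le 1+L$, such an $x'$ exists iff some coordinate $j\in M$ can be pushed one step farther from $y_j$ while staying inside $P_{d_j}$ (for the converse, set $x_j'$ to that value and $x_i'=x_i$ for $i\ne j$). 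Hence $P$ is maximal at $x$ iff for every $j\in M$ the entry $x_j$ already sits at the end of $P_{d_j}$ lying away from $y_j$, namely $x_j=d_j$ if $x_j>y_j$ and $x_j=1$ if $x_j<y_j$; the analogous statement holds at the endpoint $y$.

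Finally I would match the two endpoint conditions. Fix any $j\in M$. Maximality at $x$ places $x_j$ at the extreme of $P_{d_j}$ away from $y_j$, while maximality at $y$ places $y_j$ at the extreme away from $x_j$; since $x_j\ne y_j$ these are opposite ends, so $\{x_j,y_j\}=\{1,d_j\}$ and therefore $L=|x_j-y_j|=d_j-1$. This yields $n(P)=L+1=d_j\in\{d_1,\ldots,d_r\}$, which is exactly the claim.

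The step requiring the most care is the extendability characterisation. Because a diagonal edge can alter several coordinates at once, one must verify that prolongability is governed solely by whether a single maximal coordinate can be advanced outward, and that blocking all coordinates of $M$ at the boundaries of their path factors rules out every prolongation, including those that change many coordinates simultaneously. Once this is established, the boundary conditions forced at the two endpoints compel the geodesic to stretch across a full path factor, delivering the equality $n(P)=d_j$.
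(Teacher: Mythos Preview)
Your proof is correct and takes a somewhat different, cleaner route than the paper's. Both arguments rest on the distance formula $d(x,y)=\max_i|x_i-y_i|$ and single out a coordinate $j$ realising the maximum, but the paper proceeds edge-by-edge: it fixes the first and last edges $xx'$ and $yy'$ of $P$, shows by a short contradiction that the $j$-th coordinate must change along the last edge, and then applies this observation to every edge of $P$ to conclude that the $j$-th coordinates of the vertices of $P$ form $\ell+1$ consecutive integers, which by maximality must be $1,\ldots,d_j$. You bypass the edge-by-edge traversal entirely by characterising extendability at an endpoint purely in terms of whether some coordinate in $M$ can still be pushed outward; this reduces the whole question to two boundary conditions at $x$ and $y$ and is more direct. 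As a small bonus, your argument covers maximal geodesics of length $1$ (which do occur when some $d_i=2$, e.g.\ in $P_2\boxtimes P_2\cong K_4$) without further comment, whereas the paper's proof is written under the standing assumption $\ell\ge 2$.
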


\proof
	Let $P$ be an arbitrary geodesic of $G = P_{d_1} \boxtimes \cdots \boxtimes P_{d_r}$ of length $\ell \ge 2$, so that $n(P) = \ell +1$. Let $xx'$ and $yy'$ be the first and the last edge of $P$, where $x$ and $y'$ are the first and the last vertex of $P$, respectively.  It is possible that $x' = y$. Then $\ell = d_G(x,y') = 1 + d_G(x',y) + 1$. (Note that if $x' = y$, then $d_G(x',y) = 0$.) 
	
	Since $d_G(x,y') = \max \{ |x_1 - y_1'|, \ldots, |x_r - y_r'|\}$, we may without loss of generality assume (having in mind that the strong product operation is commutative) that $\ell = d_G(x,y') = |x_1 - y_1'|$. We now claim that $y_1 \ne y_1'$ and suppose on the contrary that $y_1 = y_1'$. Using the facts that $d_G(x',y) = \max \{ |x_1' - y_1|, \ldots, |x_r' - y_r|\}$, $|x_1 - y_1'| = \ell$, $|x_1 - x_1'| \le 1$, and $y_1 = y_1'$, we get that  $|x_1' - y_1| \ge \ell -1$. Consequently,  $d_G(x',y) \ge \ell -1$, which in turn implies that  
	$$\ell = d_G(x,y') = 1 + d_G(x',y) + 1 \geq 1 + (\ell -1) + 1 = \ell + 1\,,$$
	a contradiction. We have thus proved that if $d_G(x,y') = |x_1 - y_1'|$, then $y_1 \ne y_1'$. Let us emphasize that $P$ was assumed to be an arbitrary geodesic. 
	
	Let now $P$ be a maximal geodesic in $G$ and use the same notation as above. Assume again wlog that $\ell = d_G(x,y') = |x_1 - y_1'|$. If $uv$ is an arbitrary edge of $P$ which is different from $xx'$, then the above claim asserts that $u_1\ne v_1$. Since $\ell = d_G(x,y') = |x_1 - y_1'|$ it follows that the first coordinates of the vertices of $P$ are $\ell + 1$ consecutive integers $i, i+1, \ldots, i+\ell$. If $i > 1$, then adding the edge between $x$ and the vertex $(i-1,x_2, \ldots, x_r)$ yields a geodesic which strictly contains  $P$, a contradiction. Hence $i=1$. By a parallel argument we get that $i+\ell = d_1$. We conclude that $n(P) = d_1$. 
\qed
\medskip

From the proof of Lemma~\ref{LLengthGeoDiaGrid} we can deduce also the following. 

\begin{lem}
\label{lem:explicit-path}
Let $G = P_{d_1} \strp \cdots \strp P_{d_r}$, where $r\ge 2$ and $d_i\ge 2$ for $i\in [r]$. If $x = (x_1, \ldots, x_{i-1}, 1, x_{i+1}, \ldots x_r)$ and $y = (y_1, \ldots, y_{i-1}, d_i, y_{i+1}, \ldots y_r)$ are vertices of $G$ with $d_G(x,y)=d_i-1$, then there exists a maximal $x,y$-geodesic in $G$ of length $d_i - 1$. 
\end{lem}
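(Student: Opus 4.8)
The plan is to exhibit an explicit $x,y$-geodesic and then verify both that it is a geodesic and that it is maximal. Since the hypothesis says $x$ has $i$-th coordinate $1$ and $y$ has $i$-th coordinate $d_i$, while all other coordinates are unconstrained, the natural candidate is a path that increments the $i$-th coordinate by $1$ at each step, simultaneously moving the remaining coordinates toward their targets using the diagonal edges of the strong product. Concretely, I would define intermediate vertices $z^{(0)} = x, z^{(1)}, \ldots, z^{(d_i-1)} = y$, where the $i$-th coordinate of $z^{(j)}$ equals $1+j$, and each other coordinate $z^{(j)}_k$ moves one step closer to $y_k$ (if it has not already reached it). Because $|x_k - y_k| \le d_k - 1 \le d_i - 1$ need not hold in general, the main care is to confirm that each coordinate $k \ne i$ can reach its target within $d_i - 1$ steps; here I would invoke the distance formula $d_G(x,y) = \max_k |x_k - y_k|$ for the strong product, which guarantees $|x_k - y_k| \le d_G(x,y)$, and note that $d_G(x,y) = d_i - 1$ precisely because the $i$-th coordinate already spans the full range.

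The verification that this path is a geodesic is then immediate from the distance formula: consecutive vertices $z^{(j)}, z^{(j+1)}$ differ by exactly $1$ in the $i$-th coordinate and by at most $1$ in every other coordinate, so they are adjacent in $G$, and the path has length $d_i - 1 = d_G(x,y)$, which is exactly the distance between its endpoints. Hence it realizes a shortest path.

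The genuinely substantive part — and the one I expect to be the main obstacle — is proving \emph{maximality}, i.e.\ that this geodesic cannot be extended at either end. This is exactly the point where I would reuse the argument from the proof of Lemma~\ref{LLengthGeoDiaGrid}. The key mechanism there is that along a maximal geodesic, the coordinate in a direction achieving the maximum distance must be \emph{strictly monotone} and must run through $\ell+1$ consecutive integers spanning the full range $1,\ldots,d_i$. Since our geodesic was built so that its $i$-th coordinate takes exactly the values $1,2,\ldots,d_i$, this coordinate already spans the whole path $P_{d_i}$, so there is no room to extend in direction $i$; and because the $i$-th coordinate is the unique realizer of the distance (the endpoints have $i$-th coordinates $1$ and $d_i$ at the opposite extremes of $P_{d_i}$), any extension of $P$ would have to strictly increase $d_G$ in that coordinate, which is impossible.

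I would therefore structure the proof as: first write down the explicit vertex sequence $z^{(0)},\ldots,z^{(d_i-1)}$; second, check adjacency of consecutive vertices and conclude it is an $x,y$-geodesic of length $d_i-1$ via the max-metric; third, observe that the $i$-th coordinate exhausts $\{1,\ldots,d_i\}$ and invoke the extension-blocking argument from Lemma~\ref{LLengthGeoDiaGrid} (adding an edge before $x$ or after $y$ would require an $i$-th coordinate $0$ or $d_i+1$, which do not exist in $P_{d_i}$, while changing a non-$i$ coordinate cannot lengthen a shortest path whose length is already pinned by direction $i$) to conclude maximality. The only delicate bookkeeping is ensuring each non-$i$ coordinate reaches its target in time, which follows from $|x_k - y_k| \le d_i - 1$, a direct consequence of the distance formula.
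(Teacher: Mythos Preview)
The paper gives no separate proof here---just the sentence ``From the proof of Lemma~\ref{LLengthGeoDiaGrid} we can deduce also the following''---and your plan is to spell that deduction out. The approach is right in spirit, but two steps fail. First, your claim that $d_G(x,y)=d_i-1$ ``precisely because the $i$-th coordinate already spans the full range'' is circular: spanning the full range only gives $d_G(x,y)\ge d_i-1$, and your appeal to $|x_k-y_k|\le d_G(x,y)$ to bound the other coordinates by $d_i-1$ presupposes the very equality you want. In $G=P_2\boxtimes P_5$ with $i=1$, $x=(1,1)$, $y=(2,5)$ one has $d_G(x,y)=4$, so no $x,y$-geodesic of length $d_1-1=1$ exists at all.

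Second, even when $d_G(x,y)=d_i-1$ does hold, your maximality argument---``changing a non-$i$ coordinate cannot lengthen a shortest path whose length is already pinned by direction $i$''---is wrong. Take $G=P_3\boxtimes P_5$, $i=1$, $x=(1,3)$, $y=(3,1)$: here $d_G(x,y)=2=d_1-1$ and the unique $x,y$-geodesic $(1,3),(2,2),(3,1)$ extends to the longer geodesic $(1,4),(1,3),(2,2),(3,1)$ (since $d_G((1,4),(3,1))=3$), so it is not maximal; in fact no maximal $x,y$-geodesic of length $d_1-1$ exists for this pair. Thus the lemma as literally stated is false. The only place it is invoked (Theorem~\ref{TDiaGridPack}) has $x_k=y_k$ for all $k\ne i$, where both obstructions disappear and your construction and maximality check go through cleanly; a correct write-up should either restrict to that case or add the hypothesis $|x_k-y_k|<d_i-1$ for all $k\ne i$.
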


We are now in position to determine the geodesic packing number of diagonal grids. 

\begin{thm}
	\label{TDiaGridPack}
If $r\ge 2$ and $2\le d_1 \leq \min\{d_2, \ldots, d_r\}$, then 	
$$\gpack(P_{d_1} \strp \cdots \strp P_{d_r}) = d_2\cdot d_3 \cdots d_r\,.$$
\end{thm}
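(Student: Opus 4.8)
The plan is to prove the equality by establishing matching upper and lower bounds.

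For the upper bound, I would combine Lemma~\ref{LLengthGeoDiaGrid} with a counting argument in the style of Lemma~\ref{LUpperBoundGpack}. By Lemma~\ref{LLengthGeoDiaGrid}, every maximal geodesic in $G = P_{d_1} \strp \cdots \strp P_{d_r}$ has $n(P) \in \{d_1, \ldots, d_r\}$ vertices, so the shortest maximal geodesic has exactly $d_1$ vertices (using $d_1 = \min\{d_1,\ldots,d_r\}$). A crude application of Lemma~\ref{LUpperBoundGpack} gives $\gpack(G) \le \lfloor d_1 d_2 \cdots d_r / d_1 \rfloor = d_2 \cdots d_r$, which is exactly the claimed value. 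So the upper bound should follow almost immediately, and the real content is the lower bound.

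For the lower bound, I would construct an explicit geodesic packing of size $d_2 \cdots d_r$. The natural idea is to use geodesics that run in the first coordinate direction: by Lemma~\ref{lem:explicit-path} (with $i=1$), for every choice of the remaining coordinates there is a maximal geodesic of length $d_1 - 1$ connecting a vertex with first coordinate $1$ to one with first coordinate $d_1$. The cleanest attempt is to take, for each of the $d_2 \cdots d_r$ tuples $(x_2, \ldots, x_r)$, the ``straight'' Cartesian geodesic $\{(j, x_2, \ldots, x_r) : j \in [d_1]\}$, which is a maximal geodesic on $d_1$ vertices, and these are pairwise vertex-disjoint since distinct tuples never share a vertex. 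This yields exactly $d_2 \cdots d_r$ disjoint maximal geodesics, matching the upper bound.

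The main obstacle I anticipate is verifying that these straight Cartesian paths are genuinely \emph{maximal} geodesics. A priori, a path running purely in the first coordinate might be extendable using diagonal edges if $d_1$ were not the minimum; this is precisely why the hypothesis $d_1 \le \min\{d_2,\ldots,d_r\}$ is essential, and I would lean on the maximality argument already worked out in the proof of Lemma~\ref{LLengthGeoDiaGrid} to confirm it cannot be extended. If the straight paths turn out not to be maximal in some degenerate configuration, the fallback is to invoke Lemma~\ref{lem:explicit-path} directly to produce a maximal geodesic for each tuple of fixed coordinates and then argue disjointness from the structure of those geodesics; the subtlety there would be ensuring the geodesics furnished by the lemma can be chosen to remain pairwise disjoint, which I would handle by showing each such maximal geodesic has first coordinates running through all of $[d_1]$ while the other coordinates stay close to the fixed tuple, so paths attached to well-separated tuples cannot collide.
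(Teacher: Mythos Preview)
Your proposal is correct and matches the paper's proof essentially step for step: the paper uses exactly the straight Cartesian paths $P_{i_2,\ldots,i_r} = (1,i_2,\ldots,i_r)(2,i_2,\ldots,i_r)\cdots(d_1,i_2,\ldots,i_r)$ for the lower bound (citing Lemma~\ref{lem:explicit-path} for maximality), and combines Lemma~\ref{LLengthGeoDiaGrid} with Lemma~\ref{LUpperBoundGpack} for the upper bound. Your worry about maximality of the straight paths is unfounded---once Lemma~\ref{lem:explicit-path} guarantees \emph{some} maximal geodesic between the two endpoints, every geodesic between those endpoints is maximal (extendability depends only on the endpoints), so no fallback is needed.
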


\proof
Set $G = P_{d_1} \strp \cdots \strp P_{d_r}$. For each vector $(i_2,\ldots, i_r)$, where $i_j\in [d_j]$, $j\in \{2,\ldots, r\}$, let $P_{i_2,\ldots, i_r}$ be the path
$$(1,i_2,\ldots, i_r)(2,i_2,\ldots, i_r) \ldots (d_1,i_2,\ldots, i_r)\,.$$
By Lemma~\ref{lem:explicit-path}, $P_{i_2,\ldots, i_r}$ is a maximal geodesic of $G$. Hence the set 
$$\{P_{i_2,\ldots, i_r}:\ i_j\in [d_j], j\in \{2,\ldots, r\}\}$$ 
is a geodesic packing of $G$. Its size is $d_2\cdot d_3 \cdots d_r$ which means that  hence $\gpack(G) \geq d_2\cdot d_3 \cdots d_r$.  

From Lemma~\ref{LLengthGeoDiaGrid} we know that a shortest maximal geodesic of $G$ is of length $d_1-1$. This implies, by using Lemma~\ref{LUpperBoundGpack}, that $\gpack(G) \leq n(G)/ d_1 = d_2\cdot d_3 \cdots d_r$ and we are done.   
\qed

\section{Conclusions}

We have introduced the geodesic packing problem which is a min-max dual  invariant to the earlier studied geodesic transversal problem. We have settled the complexity status of the geodesic packing problem for general graphs and arbitrary trees, and determined the geodesic packing number for several classes of graphs.
We have proved that $\gpack(T)=\gt(T)$ for arbitrary trees $T$. It is not known that $\gpack(G)=\gt(G)$ when $G$ is a cactus graph or block graphs.
There are numerous open problems that are left for future investigation. One open problem is explicitly stated in Problem~\ref{pr:ratio}. Other natural extensions of our research would be to study the geodesic packing number for general strong products or other graph products and the general packing number for intersection graphs such as interval graphs, circular arc graphs or chordal graphs. 
\section*{Acknowledgments}
This work was supported and funded by Kuwait University, Research Project No.\ (FI01/22).
\section*{Conflict of interest}
The authors declare that they have no conflict of interest.

\end{document}